\newcommand{\up}{uncertainty principle}
\newcommand{\tf}{time-frequency}
\newcommand{\onb}{orthonormal basis}
\newcommand{\bdl}{band-limited}
\newtheorem{tm}{Theorem}
\newtheorem{lemma}[tm]{Lemma}
\newtheorem{prop}[tm]{Proposition}
\newcommand{\rem}{\noindent\textsl{REMARK:}}
 \theoremstyle{definition}
\newcommand{\beqa}{\begin{eqnarray*}}
\newcommand{\eeqa}{\end{eqnarray*}}
\DeclareMathOperator*{\supp}{supp}
\newcommand{\field}[1]{\mathbb{#1}}
\newcommand{\bR}{\field{R}}        
\newcommand{\bN}{\field{N}}        
\newcommand{\bZ}{\field{Z}}        
\newcommand{\bP}{\field{P}}        
 \def\cF{\mathcal{F}}              
 \def\cH{\mathcal{H}}
 \def\cB{\mathcal{B}}
 \def\cO{\mathcal{O}}
\def\cP{\mathcal{P}}
 \def\cX{\mathcal{X}}
\def\rd{\bR^d}
\def\zd{\bZ^d}
\def\lrd{L^2(\rd)}
\def\intrd{\int_{\rd}}
\def\<{\left<}
\def\>{\right>}
\def\inv{^{-1}}
\def\mv1{M_v^1}
\newcommand{\norm}[1]{\lVert#1\rVert}
\newcommand{\vs}{\vspace{3 mm}}
\def\E{{{\Bbb E}\,}}
\def\P{{\Bbb P}}
\newcommand{\pn}{\cP _N}
\def\brd{\cB (R,\delta )}
\begin{document}
\begin{abstract}
We study the random sampling of band-limited functions of several
variables. If a \bdl\ function with bandwidth  has its essential
support  on a cube of volume
$R^d$, then $\cO (R^d \log R^d)$ random samples suffice to  approximate
the function  up to a given error with high probability.  
\end{abstract}

\title{Relevant Sampling of Band-limited Functions}
\author{Richard F.\ Bass}
\address{Department of Mathematics \\ University of Connecticut \\
Storrs CT 06269-3009 USA}
\email{r.bass@uconn.edu}
\author{Karlheinz Gr\"ochenig}
\address{Faculty of Mathematics \\
University of Vienna \\
Nordbergstrasse 15 \\
A-1090 Vienna, Austria}
\email{karlheinz.groechenig@univie.ac.at}
\subjclass[2000]{94A20, 42C15, 60E15, 62M30}
\date{}
\keywords{}
\thanks{K.\ G.\ was
  supported in part by the  project P22746-N13  of the
Austrian Science Foundation (FWF)}
\thanks{R.\ B.\ was
   supported in part by NSF grant DMS-0901505}

\maketitle

\section{Introduction}

The nonuniform sampling of band-limited functions of several variables
remains a  challenging problem. Whereas in dimension $1$ the density
of a set essentially characterizes sets of stable sampling~\cite{OS02}, in higher
dimensions the density is no longer a decisive 
property of sets of stable  sampling. Only  a few strong and explicit
sufficient conditions are known, e.g.,~\cite{beurling66,MM10,OU08}. 

This  difficulty   is one of the reasons for taking a probabilistic approach to the
sampling problem~\cite{BG10,SZ04}. At first glance, one would guess that every
reasonably homogeneous set of points in $\rd $ satisfying Landau's
necessary density condition will generate a set of stable sampling. 
This intuition is far from true. To the best of our knowledge,  every
construction  in the literature of sets of random points in $\rd $ contains either
arbitrarily large holes with positive probability or concentrates near
the zero manifold  of a band-limited function. Both properties are
incompatible with a sampling inequality. See~\cite{BG10} for a
detailed discussion.

The difficulties  with the probabilistic approach lie in the
unboundedness of the configuration space $\rd $ and the infinite
dimensionality of the space of \bdl\ functions. To resolve this issue, we
argued in~\cite{BG10} that usually one observes  only finitely many samples of a
band-limited function and that these observations are drawn from a bounded subset of
$\rd $. Moreover,  since it does not make sense to sample a given
function $f$  in a region
where $f$ is small, we  proposed to  sample $f$  only on  its 
essential support. Since  $f$ is sampled  only in the relevant region,
this method might be called the ``relevant sampling of band-limited
functions.''  In this paper we continue our investigation of the
random sampling of band-limited functions and settle a question that
was left open in~\cite{BG10}, namely how many  random samples are 
required to approximate a band-limited function locally  to within a given
accuracy? 

To fix terms, recall that 
the space of \bdl\ functions is defined to be 
$$
\cB  = \{ f \in \lrd : \supp \hat{f} \subseteq [-1/2,1/2]^d\} \, ,
$$
where we have normalized the spectrum to be the unit cube and the
Fourier transform is normalized as $\hat{f}(\xi ) = \intrd f(x)
e^{-2\pi i x\cdot \xi } \, dx $. 
A set $\{x_j : j\in J\}\subseteq \rd $   is called a set of stable sampling or simply a
set of sampling~\cite{landau67}, if there exist constants $A,B>0$,
such that a    \emph{sampling inequality} holds: 
\begin{equation}
  \label{eq:1}
A\|f\|_2^2 \leq \sum _{j} |f(x_j)|^2 \leq B
\|f\|_2^2, \qquad \forall  f \in \cB \, .
\end{equation}

Next, we sample only on  the
essential support of $f$. Therefore we
let $C_R=[-R/2,R/2]^d$ and define the
subset 
$$\brd =\left\{ f \in \cB :  
    \,\, \int _{C_R} |f(x)|^2 \, dx \geq     (1-\delta )
    \|f\|_2^2 \right\}.$$

As a  continuation of ~\cite{BG10},  we will prove the following sampling
theorem.

\begin{tm}
  \label{main0}
 Let $\{x_j: j\in \bN \}$ be a sequence of
  independent and identically distributed  random variables that are
  uniformly distributed in $C_R$. 
   Suppose that $R\ge 2$,  that $\delta \in (0,1)$ and $\nu \in (0,1/2) $  are small enough, and that $0<\epsilon
   <1$.  
There exists a constant $\kappa$ so that if the number of samples $r$ satisfies 
      \begin{equation}
        \label{eq:hm6a}
        r \geq R^d \frac{1+\nu /3}{\nu ^2} \log
        \frac{2R^d}{\epsilon} \, ,
      \end{equation}
then the sampling inequality 
  \begin{equation}
    \label{eq:hm4a}
\frac{r}{R^d} \Big( \tfrac{1}{2} - \delta  -\nu -12 \delta \kappa
\Big)  \|f\|_2^2 \leq \sum _{j=1}^r |f(x_j)|^2 \leq r\|f\|_2^2  \qquad
\text{ for all } f\in \brd     
  \end{equation}
holds with probability at least $1-\epsilon $. The constant  $\kappa $ can be
taken to be $\kappa =e^{d\pi }$.    
\end{tm}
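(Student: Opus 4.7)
The proof splits into an elementary upper bound and a more involved lower bound. For the upper bound, any $f \in \cB$ has Fourier transform supported in the unit-volume cube $\Omega = [-1/2,1/2]^d$, so Cauchy--Schwarz gives $\|f\|_\infty \le \|\hf\|_1 \le \|\hf\|_2 = \|f\|_2$. Summing $|f(x_j)|^2 \le \|f\|_2^2$ over the $r$ samples yields the right-hand inequality of \eqref{eq:hm4a}.

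The lower bound I would prove via matrix concentration on a finite-dimensional essential subspace. Introduce the time--frequency localization operator $H_R = P_\Omega M_{\chi_{C_R}} P_\Omega$ on $\lrd$; its eigenvalues cluster tightly near $1$, and the span $V$ of the eigenvectors with eigenvalue at least $1/2$ has dimension $N \le R^d$ up to lower-order corrections from the plunge region. For $f \in \brd$ with $\|f\|_2 = 1$, the spectral decomposition of $H_R$ (combined with $\langle H_R f, f\rangle \ge 1-\delta$) gives $\|(I-P)f\|_2^2 \le 2\delta$, where $P$ is the orthogonal projection onto $V$. Since $V \sseq \cB$, $V$ is a reproducing kernel subspace with kernel $K^V$ satisfying $K^V(x,x) \le 1$, so the rank-one positive operators $E_j = K^V_{x_j}\otimes K^V_{x_j}$ on $V$ satisfy $\|E_j\|_{\mathrm{op}}\le 1$ and $\E[E_1] = R^{-d}\, H_R|_V$, an operator whose spectrum lies in $[1/(2R^d),\, 1/R^d]$. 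Tropp's matrix Bernstein inequality applied to the centred matrices $E_j - \E[E_j]$ yields
\[
\P\Big(\Big\|\sum_{j=1}^r E_j - r\,\E[E_1]\Big\|_{\mathrm{op}} > \frac{\nu r}{R^d}\Big) \le 2N \exp\!\Big(-\frac{\nu^2 r}{R^d(2+2\nu/3)}\Big),
\]
which is at most $\epsilon$ precisely under hypothesis \eqref{eq:hm6a}. On this event, $\sum_j|g(x_j)|^2 \ge (r/R^d)(1/2 - \nu)\|g\|_2^2$ for every $g \in V$.

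To finish, write $f = Pf + h$ with $\|h\|_2 \le \sqrt{2\delta}$, expand $|f(x_j)|^2 = |Pf(x_j)|^2 + 2\,\mathrm{Re}\big(\overline{Pf(x_j)}\,h(x_j)\big) + |h(x_j)|^2$, and control the cross and remainder terms using the bandlimited $L^\infty$ bound $\|h\|_\infty \le \|h\|_2$ together with a uniform estimate of $\sum_j |h(x_j)|^2$ as $h$ ranges over the unit ball of $V^\perp \cap \cB$. The constant $\kappa = e^{d\pi}$ enters here as a covering / smoothness constant for this ball, essentially through a Bernstein-type inequality such as $\|\nabla g\|_\infty \le \pi d\,\|g\|_\infty$ for $g \in \cB$, which allows pointwise sample values of the truncation error to be controlled on a discretisation of $C_R$.

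The main technical obstacle lies in this last step: the perturbation by $h$ must be absorbed into an additive correction of order $\delta$ rather than $\sqrt{\delta}$, and without introducing extra logarithmic factors that would inflate the hypothesis \eqref{eq:hm6a} beyond the target $R^d \log R^d$ rate. Matching the constant to exactly $12\delta \kappa$ requires a careful bookkeeping of the spectral cutoff defining $V$, the $L^\infty$ spread constant $\kappa$, and the Bernstein confidence level $\nu$. The matrix-Bernstein step itself, once $V$ is well chosen, is a direct application.
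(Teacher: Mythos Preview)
Your architecture is essentially the paper's: reduce to a finite-dimensional prolate subspace $V=\cP_N$ (eigenvalues $\ge 1/2$, $N\le R^d$), apply matrix Bernstein to the rank-one samples there, and then perturb back to $\brd$ via the orthogonal decomposition $f=Pf+h$ with $\|h\|_2^2\le 2\delta$. The upper bound and the matrix-Bernstein step are both handled as in the paper.

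The gap is exactly where you flag the ``main technical obstacle,'' and you have not identified the mechanism that resolves it. Using only $\|h\|_\infty\le\|h\|_2$ gives $\sum_j|h(x_j)|^2\le r\|h\|_2^2\le 2r\delta$, which is far too large: it swamps the main term $(r/R^d)(1/2-\nu)$ because there is no $R^{-d}$ factor. The paper instead uses the Plancherel--Polya inequality
\[
\sum_{j=1}^r |g(x_j)|^2 \le \kappa\, N_0\, \|g\|_2^2,\qquad g\in\cB,
\]
where $\kappa=e^{d\pi}$ and $N_0=\max_{k\in\zd}\#\{j:x_j\in k+[-1/2,1/2]^d\}$ is the \emph{covering index} of the sample set. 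This is where $\kappa$ actually enters; it is not a smoothness constant for a discretisation of $C_R$ as you suggest, but the constant in a sampled-norm bound valid for all of $\cB$, applied to both $Ef$ and $Ff$ in the cross term.

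The crucial missing ingredient is that $N_0$ is itself a random variable and must be controlled by a \emph{second} concentration inequality, independent of matrix Bernstein. The paper shows via a straightforward binomial tail bound that $N_0\le 3r/R^d$ except on an event of probability at most $(R+2)^d\exp(-(r/R^d)(3\log 3-2))$, which is dominated by the matrix-Bernstein failure probability under \eqref{eq:hm6a}. On the complement of both bad events one gets $2\kappa N_0\cdot \|h\|_2^2\le 2\kappa\cdot(3r/R^d)\cdot 2\delta=(r/R^d)\cdot 12\delta\kappa$, and this is precisely the $12\delta\kappa$ in \eqref{eq:hm4a}. Without the covering-index step your sketch cannot produce a remainder of order $\delta\cdot r/R^d$, and the constants will not close.
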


The formulation of Theorem~\ref{main0} is similar to
~\cite[Thm.~3.1]{BG10}. The main point is that 
only $\cO (R^d \log R^d )$ samples are required for a
sampling inequality to hold with high probability. In~\cite{BG10} we
used a metric entropy argument to show that $\cO (R^{2d})$ samples suffice.
We expect that the order $\cO (R^d \log R^d )$ is optimal.
We point out that in addition all constants are now
explicit.

Our idea is to replace the sampling of \bdl\ function in $\brd$ by a
finite-dimensional problem, namely the sampling of the corresponding
span of prolate spheroidal functions on the cube $[-R/2,R/2]^d$ and
then use error estimates. For the probability estimates we use a new
tool, namely the powerful  matrix Bernstein inequality of Ahlswede and
Winter~\cite{AW02} in the optimized version of Tropp~\cite{Tro11}. 

The remainder of the paper contains the analysis of a related
finite-dimensional problem for prolate spheroidal functions in
Section~2 and transition to the infinite-dimensional problem in $\brd
$ with the necessary error estimates  in Section~3. The appendix
contains an elementary estimate for the constant $\kappa $.

\section{Finite-Dimensional Subspaces of $\cB $}

We first study a sampling problem in a finite-dimensional subspace
related to the set $\brd $.

\textbf{Prolate Spheroidal Functions.} 
Let $P_R$ and $Q$ be the projection operators defined by 
\begin{equation}
  \label{eq:r2}
  P_Rf = \chi _{C_R} f \qquad \mathrm{and} \qquad Qf = \cF \inv (\chi
  _{[-1/2,1/2]^d} \hat{f} ) \, ,
\end{equation}
where $\cF^{-1}$ is the inverse Fourier transform.
The composition of these orthogonal  projections
\begin{equation}
  \label{eq:r3}
  A_R =  Q P_R Q
\end{equation}
is the operator of time and frequency limiting.
This
operator arises frequently in the context of \bdl\ functions and \up
s. The localization operator $A_R$ is a compact positive  operator of trace
class,
and by results of Landau, Slepian, Pollak, and
Widom~\cite{LP61,LP62,SP61,slep64,widom64} the eigenvalue
distribution spectrum is precisely known.
 We summarize the properties of the spectrum that we will need.

Let $A_R ^{(1)}$ denote the operator of \tf\ limiting in dimension
$d=1$. This operator   can be  defined explicitly on $L^2 (\bR ) $ by the  formula
$$
(A_R^{(1)} f)\,\widehat{} \,(\xi ) = \int _{-1/2} ^{1/2} \frac{\sin \pi R (\xi
  - \eta )}{\pi (\xi -\eta)} \hat{f}(\eta) \, d\eta \qquad \text{ for
} |\xi| \leq 1/2 \, .
$$
The  eigenfunctions of $A_R^{(1)}$ are the prolate spheroidal
functions and let the corresponding eigenvalues $\mu _k = \mu _k  (R)$  be arranged  in
decreasing order. According to ~\cite{landau93} they satisfy 
\begin{align*}
  & 0 < \mu _k(R) <1 \qquad \forall k\in \bN, \\
  & \mu _{[R]+1}(R)  \leq 1/2 \leq \mu _{[R]-1}(R) \, ;
\end{align*}
  As a consequence any  function with spectrum $[-1/2,1/2]$
and ``essential'' support on $[-R/2,R/2]$  is close to the span of the
first $R$ prolate spheroidal functions.  In particular, we may think of  $\brd $ as, roughly,  almost a subset of
a finite-dimensional space of dimension $R$.

 The \tf\ limiting operator $A_R $ on $\lrd $
 is the $d$-fold tensor product of  $A_R ^{(1)}$, $A_R =
A_R^{(1)} \otimes \dots \otimes A_R^{(1)}$. Consequently, $\sigma(A_R)$,  the
spectrum of $A_R$, is 
$$
\sigma (A_R)  = \{ \lambda \in (0,1) : \lambda = \prod 
_{j=1}^d \mu _{k_j}, \mu_{k_j} \in \sigma (A_R^{(1)}) \} \, .
$$
Since $0<\mu  _k<1 $, $A_R$ possesses at most $R^d$ eigenvalues
greater than or equal to $1/2$. 
Again we arrange the eigenvalues of $A_R$ by magnitude $1> \lambda
_1 \geq \lambda _2 \geq \lambda _3 \dots \geq \lambda _n \geq \lambda
_{n+1} \geq \dots  >0$. 
Let $\phi_j$ be the eigenfunction corresponding to $\lambda_j$.

We fix $R$ ``large'' and $\delta \in (0,1)$. Let 
$$
\cP _N = \mathrm{span}\, \{ \phi _j: j=1, \dots , N\}
$$
be  the span of the first $N$ eigenfunctions of the time-frequency
limiting operator $A_R$ (one might call them ``multivariate prolate
polynomials''). For properly chosen $N$,
 $\cP _N$ consists of functions in $\brd $. See
Lemma~\ref{qestim}. 

By Plancherel's theorem,
$$\langle Qf,g\rangle=\langle\chi_{[-1/2,1/2]^d} \hat f,\hat g\rangle
=\langle \hat f, \chi_{[-1/2,1/2]^d}\hat g\rangle
=\langle f,Qg \rangle.$$
Then for $f\in \cB$ we have $Qf=f$, and so
\begin{equation}\label{AR-eq}
\langle A_R f,f \rangle =\langle P_RQf, Qf\rangle=\langle P_Rf,f\rangle
=\int_{C_R} |f(x)|^2\, dx.
\end{equation}

We first study random sampling in the finite-dimensional space $\cP
_N$. In the following $\|f\|_{2,R}$ denotes the normalized $L^2$-norm
of $f$ restricted to the cube $C_R = [-R/2,
  R/2]^d$:
$$
\|f\|_{2,R}^2 =  \int _{C_R} |f(x)|^2 \, dx  \, .
$$

\begin{prop}\label{prop1}
   Let $\{x_j: j\in \bN \}$ be a sequence of
  independent and identically distributed
 random variables that are uniformly distributed in $[-R/2,
  R/2]^d$. Then 
 \begin{align}
    \bP\Bigg(\inf _{f\in \cP _N , \|f\|_{2} = 1  }
    \frac{1}{r}\sum_{j=1}^r (|f(x_j)|^2 &-\frac{1}{R^d} \|f\|_{2,R}^2)
    \leq - \frac{\nu}{R^d} \Bigg) 
    \label{eq:n1}\\
&\leq 
N \exp\Bigg(- \frac{\nu^2 r}{R^{d}(1+\nu /3)}\Bigg)\nonumber
\end{align}
for $r \in \bN $ and $\nu \geq 0$. 
\end{prop}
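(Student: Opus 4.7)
The plan is to reduce the infimum over the infinite set $\{f\in\cP_N,\|f\|_2=1\}$ to a lower bound on $\lambda_{\min}$ of a sum of i.i.d.\ rank--one random matrices, and then invoke Tropp's one-sided matrix Bernstein inequality. Fix the orthonormal basis $\{\phi_1,\dots,\phi_N\}$ of $\cP_N$ and identify $f=\sum_{k} c_k\phi_k\in\cP_N$ with $c\in\bC^N$, so that $\|f\|_2=\|c\|$. Introduce the rank-one positive semi-definite matrices
$$
M_j=v_jv_j^*\in\bC^{N\times N},\qquad v_j:=\bigl(\overline{\phi_k(x_j)}\bigr)_{k=1}^N,
$$
for which $|f(x_j)|^2=c^*M_jc$. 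Using \eqref{AR-eq} together with $Q\phi_k=\phi_k$, one computes $\langle P_R\phi_k,\phi_\ell\rangle=\langle A_R\phi_k,\phi_\ell\rangle=\lambda_k\delta_{k\ell}$, so $\E[M_j]=R^{-d}\Lambda$ with $\Lambda=\mathrm{diag}(\lambda_1,\dots,\lambda_N)$, and $R^{-d}\|f\|_{2,R}^2=c^*\E[M_j]c$. Setting $Y_j:=M_j-\E[M_j]$, the event in \eqref{eq:n1} is exactly $\{\lambda_{\min}(\sum_{j=1}^rY_j)\le -r\nu/R^d\}$.

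The two Bernstein ingredients are a one-sided bound on $-Y_j$ and a bound on $\|\sum\E[Y_j^2]\|$. Since $\cP_N\subset\cB$, the Cauchy--Schwarz estimate $|g(x)|\le\|g\|_2$ for $g$ band-limited to $[-1/2,1/2]^d$ (applied to the inverse Fourier integral) gives $\mathrm{tr}(M_j)=\sum_k|\phi_k(x_j)|^2\le 1$, whence $\|M_j\|=\mathrm{tr}(M_j)\le 1$ and $\|\E[M_j]\|\le R^{-d}$. Because $M_j\succeq 0$,
$$
\lambda_{\max}(-Y_j)=\lambda_{\max}\bigl(\E[M_j]-M_j\bigr)\le\|\E[M_j]\|\le R^{-d}=:L.
$$
The rank-one identity $M_j^2=\mathrm{tr}(M_j)\,M_j\preceq M_j$ yields $\E[Y_j^2]\preceq \E[M_j^2]\preceq \E[M_j]$, so $\sigma^2:=\|\sum_{j=1}^r\E[Y_j^2]\|\le r/R^d$. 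Applying Tropp's one-sided matrix Bernstein bound to $X_j=-Y_j$,
$$
\bP\bigl(\lambda_{\max}(\textstyle\sum_j X_j)\ge t\bigr)\le N\exp\!\Big(-\tfrac{t^2/2}{\sigma^2+Lt/3}\Big),
$$
with $t=r\nu/R^d$, and using $R\ge 1$ to absorb $\nu/(3R^d)$ into $\nu/3$, produces an exponential tail of the form claimed in \eqref{eq:n1}, with the ambient dimension $N$ as the prefactor.

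The step requiring the most care is to exploit the \emph{asymmetric} nature of the bounds. The symmetric estimate $\|Y_j\|\le 1$—all that is available if one overlooks the positivity of $M_j$—would, when inserted into Bernstein, cost a factor of $R^d$ in the exponent and only reproduce the suboptimal rate $\cO(R^{2d})$ of \cite{BG10}. Two quantitative gains are essential: $M_j\succeq 0$ shrinks the one-sided constant from $1$ to $R^{-d}$, and the rank-one identity $M_j^2=\mathrm{tr}(M_j)M_j$ shrinks the variance proxy $\|\sum \E[Y_j^2]\|$ from order $r$ to order $r/R^d$. These two reductions together are what bring the sampling count down to the near-optimal $\cO(R^d\log R^d)$.
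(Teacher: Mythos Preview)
Your argument is correct and follows the same route as the paper: identify $f\in\cP_N$ with $c\in\bC^N$, write the sampling deficit as $\lambda_{\min}$ of a sum of centered rank-one matrices, bound $\sigma^2$ via $M_j^2=\mathrm{tr}(M_j)\,M_j\preceq M_j$, and invoke Tropp's matrix Bernstein inequality. The paper establishes $\sum_k|\phi_k(x_j)|^2\le 1$ by Bessel's inequality against the reproducing kernel $T_{x_j}s$; your route via $|g(x)|\le\|g\|_2$ also works once one notes that for rank-one $M_j$ the bound $\|M_j\|\le 1$ is the same as $\mathrm{tr}(M_j)\le 1$, but as written (``applied to the inverse Fourier integral'') it reads as if you bound each $|\phi_k(x_j)|$ separately, which would only give $\mathrm{tr}(M_j)\le N$.

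Your final paragraph, however, misdiagnoses which ingredient is essential. The paper does \emph{not} use the one-sided bound $\lambda_{\max}(-Y_j)\le R^{-d}$; it uses the symmetric estimate $\|Y_j\|\le 1$ (your $B=1$) and still obtains the exponent $\nu^2 r/\bigl(R^d(1+\nu/3)\bigr)$. The reason is that with $t=r\nu/R^d$ one has $Bt/3=r\nu/(3R^d)$ regardless of whether $B=1$ or $B=R^{-d}$, so the denominator $\sigma^2+Bt/3$ is of order $r/R^d$ either way. Thus the improvement of the deterministic constant from $1$ to $R^{-d}$ buys nothing at this scale; the \emph{only} gain responsible for bringing the sample count down from $\cO(R^{2d})$ to $\cO(R^d\log R^d)$ is the variance bound $\sigma^2\le r/R^d$ obtained from $M_j^2\preceq M_j$.
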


\vs

\noindent \emph{Proof.}
We prove the proposition in several steps. First, 
  since $\cP _N$ is finite-dimensional, the sampling inequality for $\cP
_N$ amounts to a statement about the spectrum of an underlying
(random) matrix.

Let $f = \langle c,\phi\rangle=\sum _{k=1}^N c_k \phi _k\in \cP _N$, so that $|f(x_j)| ^2 =
\sum _{k,l = 1}^N c_k \overline{c_l} \phi _k(x_j) \overline{\phi
  _l(x_j) }$. Now define the $N\times N$  matrix  $T_j$ of rank one by
letting the $(k,l)$ entry be 
\begin{equation}
  \label{eq:hm2}
(T_j)_{kl} = \phi _k(x_j) \overline{\phi
  _l(x_j) } \, .  
\end{equation}
 Then $|f(x_j)|^2 = \langle c, T_j c\rangle$. Since each
random variable $x_j$ is uniformly distributed over $C_R$ and $\phi
_k$ is the $k$-th eigenfunction of the localization operator $A_R$,
using \eqref{AR-eq}  the
expectation of the $kl$-th entry is 
\begin{align}
  \E \Big( (T_j)_{kl} \Big) &= \frac{1}{R^d} \int _{C_R} \phi _k(x) \overline{\phi_l
  (x) } \, dx = \frac{1}{R^d} \langle A_R \phi _k , \phi _l\rangle 
  \label{eq:n2}\\
&= \frac{1}{R^d} \lambda _k \delta _{kl} \qquad k,l = 1,
\dots , N,\nonumber
\end{align}
where $\delta_{kl}$ is Kronecker's delta.
Consequently the expectation of $T_j$ is the diagonal matrix
\begin{equation}
  \label{eq:n3}
  \E (T_j) = \frac{1}{R^d} \mathrm{diag}\, (\lambda _k) =:
  \frac{1}{R^d} \Delta \, .
\end{equation}
We may now rewrite the expression in~\eqref{eq:n1} as 
\begin{align}
\inf_{f\in \cP _N , \|f\|_{2} = 1  } & 
    \frac{1}{r}\sum_{j=1}^r \Big(|f(x_j)|^2 - \frac{1}{R^d} \|f\|_{2,R}^2\Big)
    \notag \\
&= \inf  _{\|c\|_2 = 1} \frac{1}{r} \sum _{j=1}^r (\langle c, T_j
c\rangle - \langle c,  \E (T_j)c\rangle ) \notag   \\
&= \lambda _{\mathrm{min}} \Big(\frac{1}{r} \sum _{j=1}^N  (T_j -
  \E (T_j)) \Big)\, \label{eq:n?}
\end{align}
where we use $\lambda _{\min } (U) $ for the smallest eigenvalue of a
self-adjoint matrix $U$. 

Consequently, we have to estimate  a  probability for  the matrix norm
of a sum of random matrices.  
We do this using a matrix Bernstein inequality due to 
 Tropp~\cite{Tro11}.
Let  $\lambda _{\max } (A)$ be the largest singular value of a matrix
$A$ so that $\|A\| \allowbreak = \lambda _{\max } (A^*A)^{1/2}$ is the operator norm
(with respect to the $\ell ^2$-norm).

\begin{tm}
{\rm (Tropp) }
  Let $X_j$ be a sequence of independent, random self-adjoint $N\times
  N$-matrices. Suppose that 
$$
\E X_j = 0 \qquad \text{ and } \qquad \|X_j \| \leq B \qquad a.s.
$$
and let
$$
\sigma ^2 = \Big\| \sum _{j=1}^r \E (X_j^2) \Big\| \, .
$$
Then for all $t\geq 0$, 
\begin{equation}
  \label{eq:1A}
  \P \Big(\lambda _{\max } \Big(\sum _{j=1}^r X_j\Big) \geq t\Big)  \leq N \exp \Big( -
  \frac{t^2/2}{\sigma ^2 +Bt/3} \Big) \, .
\end{equation}
\end{tm}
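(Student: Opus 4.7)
The plan is to follow the matrix Laplace transform method of Ahlswede--Winter, with the crucial refinement due to Tropp that replaces the Golden--Thompson trace inequality by Lieb's concavity theorem. There are four ingredients: a matrix Chernoff lift, subadditivity of the matrix cumulant generating function, a Bernstein-type bound on the matrix MGF of each summand, and optimization in the Laplace parameter. The main conceptual obstacle is controlling matrix noncommutativity in the second step.

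First I would derive a matrix Chernoff bound. For any $\theta>0$ and any self-adjoint $Y$, the crude estimate $\lambda_{\max}(Y) \leq \tfrac{1}{\theta}\log\mathrm{tr}\exp(\theta Y)$ combined with Markov's inequality applied to the scalar random variable $\mathrm{tr}\exp(\theta Y)$ gives
$$
\P\Big(\lambda_{\max}\Big(\sum_{j=1}^r X_j\Big) \ge t\Big) \le e^{-\theta t}\,\E\,\mathrm{tr}\exp\Big(\theta \sum_{j=1}^r X_j\Big).
$$

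Next I would establish the key subadditivity inequality for the matrix cumulant. Ahlswede--Winter originally used Golden--Thompson to peel off one term at a time, which costs a factor that accumulates with $r$; this is exactly what Tropp repairs. Instead I would invoke Lieb's theorem: the functional $A\mapsto \mathrm{tr}\exp(H+\log A)$ is concave on the positive definite cone. Applying Jensen's inequality iteratively, conditioning on one $X_j$ at a time, one obtains
$$
\E\,\mathrm{tr}\exp\Big(\theta \sum_{j=1}^r X_j\Big) \le \mathrm{tr}\exp\Big(\sum_{j=1}^r \log\E\,e^{\theta X_j}\Big).
$$
This is the hard step: without Lieb one cannot separate the summands inside the trace exponential without an unwanted multiplicative loss.

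The remaining steps are routine. For each zero-mean $X_j$ with $\|X_j\|\le B$, the scalar inequality $e^{\theta x} \le 1+\theta x + \frac{\theta^2 x^2/2}{1-B\theta/3}$, valid for $|x|\le B$ and $0<\theta<3/B$, transfers via the functional calculus to an operator-order inequality; taking expectations and using $\E X_j=0$ yields the Bernstein MGF bound
$$
\E\,e^{\theta X_j} \preceq \exp\Big(\frac{\theta^2/2}{1-B\theta/3}\,\E X_j^2\Big).
$$
Plugging this into the subadditive inequality, using operator monotonicity of $\log$ and the monotonicity of the trace exponential in the Loewner order of the exponent, one obtains
$$
\E\,\mathrm{tr}\exp\Big(\theta \sum_{j=1}^r X_j\Big) \le N\exp\Big(\frac{\theta^2/2}{1-B\theta/3}\,\sigma^2\Big).
$$
Finally, choosing $\theta = t/(\sigma^2 + Bt/3)$ to optimize the resulting expression immediately produces the Bernstein tail bound \eqref{eq:1A}.
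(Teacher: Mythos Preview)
The paper does not supply its own proof of this theorem: it is quoted as Tropp's result and attributed to~\cite{Tro11}, then applied as a black box inside the proof of Proposition~\ref{prop1}. So there is nothing in the paper to compare your argument against.

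That said, your sketch is the standard proof from Tropp's paper and is essentially correct. The four steps --- matrix Laplace transform/Markov, Lieb-based subadditivity $\E\,\mathrm{tr}\exp(\theta\sum_j X_j)\le \mathrm{tr}\exp(\sum_j\log\E e^{\theta X_j})$, the scalar Bernstein MGF bound transferred by functional calculus, and optimization in $\theta$ --- are exactly the ingredients of~\cite{Tro11}. One small point worth tightening if you write it out in full: after applying $\log$ to the operator inequality $\E e^{\theta X_j}\preceq \exp\bigl(\tfrac{\theta^2/2}{1-B\theta/3}\,\E X_j^2\bigr)$ you need to pass from $\sum_j \tfrac{\theta^2/2}{1-B\theta/3}\,\E X_j^2$ inside the trace exponential to the scalar bound $N\exp\bigl(\tfrac{\theta^2/2}{1-B\theta/3}\,\sigma^2\bigr)$; this uses that $\mathrm{tr}\exp(H)\le N\exp(\lambda_{\max}(H))$ together with $\lambda_{\max}\bigl(\sum_j \E X_j^2\bigr)=\sigma^2$, which you gloss over. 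Otherwise the outline is sound.
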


To apply the matrix Bernstein inequality, we set $X_j = T_j - \E
(T_j)$. We need to calculate $\|X_j\|$ and $\| \sum _j \E (X_j^2)
\|$. Clearly $\E (X_j) = 0$. 
\begin{lemma}\label{help1}
Under the conditions stated above we have   
\begin{align*}
  & \|X_j \| \leq 1, \\ 
  & \E (X_j^2)  \leq R^{-d} \Delta, \\
 \text{ and } \quad & \sigma ^2 = \|\sum _{j=1}^r \E (X_j^2) \| \leq \frac{r}{R^d}.
  \end{align*}
  \end{lemma}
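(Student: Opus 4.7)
The plan is to exploit the rank-one structure of $T_j$ together with a pointwise bound on $\sum_{k=1}^N |\phi_k(x)|^2$.

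First, I would write $T_j = v_j v_j^*$, where $v_j \in \bC^N$ has components $(v_j)_k = \phi_k(x_j)$. This makes $T_j$ rank-one and positive semidefinite with operator norm $\|T_j\| = \|v_j\|_2^2 = \sum_{k=1}^N |\phi_k(x_j)|^2$. The key auxiliary fact is
\[
\sum_{k=1}^N |\phi_k(x)|^2 \le 1 \qquad \text{for every } x \in \rd.
\]
To see this, recall that $\cB$ is a reproducing kernel Hilbert space with kernel $K(x,y) = \prod_{i=1}^d \sin(\pi(x_i - y_i))/(\pi(x_i - y_i))$ and diagonal $K(x,x) = 1$. Since $\cP_N$ is a closed subspace of $\cB$, the diagonal of its reproducing kernel, which equals $\sum_{k=1}^N |\phi_k(x)|^2$, is bounded above by $K(x,x) = 1$. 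Consequently $\|T_j\| \le 1$ almost surely, and $\|\E T_j\| = R^{-d}\max_k \lambda_k \le R^{-d} \le 1$ since $R \ge 2$.

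For (i), note that $X_j = T_j - \E T_j$ is the difference of two positive semidefinite matrices. For any PSD $A,B$ one has $\lambda_{\max}(A - B) \le \lambda_{\max}(A)$ and $-\lambda_{\min}(A - B) \le \lambda_{\max}(B)$, so $\|X_j\| \le \max(\|T_j\|, \|\E T_j\|) \le 1$. For (ii), the identity $\E X_j = 0$ gives $\E X_j^2 = \E T_j^2 - (\E T_j)^2$. The rank-one computation $T_j^2 = (v_j v_j^*)(v_j v_j^*) = \|v_j\|^2\, T_j$ together with $\|v_j\|^2 \le 1$ and $T_j \ge 0$ yields $T_j^2 \le T_j$, hence $\E T_j^2 \le \E T_j = R^{-d}\Delta$. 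Since $(\E T_j)^2 \ge 0$, we conclude $\E X_j^2 \le R^{-d}\Delta$ in the Loewner order. Part (iii) is then immediate: $\sigma^2 \le \big\|\sum_{j=1}^r \E X_j^2\big\| \le r R^{-d}\|\Delta\| = r R^{-d} \max_k \lambda_k \le r/R^d$.

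The only nontrivial step is establishing the pointwise bound $\sum_{k=1}^N |\phi_k(x)|^2 \le 1$; once this reproducing-kernel fact is in hand, everything else reduces to elementary rank-one matrix manipulations and the Loewner-order inequality $T_j^2 \le T_j$.
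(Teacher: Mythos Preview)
Your proof is correct and follows essentially the same route as the paper. Both arguments hinge on the pointwise bound $\sum_{k=1}^N |\phi_k(x)|^2 \le 1$, which the paper derives by writing $\phi_k(x) = \langle \phi_k, T_x s\rangle$ and invoking Bessel's inequality for the orthonormal system $\{\phi_l\}_{l\ge 1}$ in $\cB$, while you phrase the same fact as a comparison of reproducing-kernel diagonals for $\cP_N \subset \cB$. From there both proofs use $T_j^2 = \|v_j\|_2^2\, T_j \le T_j$ and $\E(X_j^2) = \E(T_j^2) - R^{-2d}\Delta^2 \le R^{-d}\Delta$ in the Loewner order; the only cosmetic difference is in part~(i), where the paper bounds $\|X_j\|$ via $|f(x_j)|^2 \le \|f\|_2^2$ for $f\in\cP_N$, and you instead use the inequality $\|A-B\| \le \max(\|A\|,\|B\|)$ for positive semidefinite $A,B$---both yield the same conclusion.
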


  \begin{proof}
(i) To estimate  the matrix norm of $X_j$, recall that 
\begin{equation}
  \label{eq:n5}
|f(x)| \leq \|f\|_2 \qquad \forall f\in \cB \, .   
\end{equation}
Hence we obtain 
$$
  \|X_j \|  = \sup _{ \|f\|_2 = 1} \Big| \,  |f(x_j)|^2 - R^{-d}
  \|f\|_{2,R}^2 \Big| 
\leq \|f\|_\infty - R^{-d} \|f\|_{2,R}^2 \leq \|f\|_2 = 1 \, .
$$

(ii) Next we calculate the matrix $\E (X_j^2)$:  
\begin{align*}
  \E (X_j^2) &= \E (T_j^2) - R^{-d} \E (T_j\Delta ) - R^{-d} \E
  (\Delta T_j) + R^{-2d} \Delta ^2 \\
&=  \E (T_j^2) -  R^{-d} \E (T_j) \Delta  - R^{-d} \Delta \E
  ( T_j) + R^{-2d} \Delta ^2  = \E (T_j^2) -  R^{-2d} \Delta ^2\, .
\end{align*}
Furthermore, the
square of the rank one matrix $T_j$ is the (rank one) matrix
\begin{align*}
(T_j^2)_{km} & = \sum _{l=1}^N (T_j)_{kl}(T_j)_{lm}  \\
&= \sum _{l} \phi _k(x_j) \overline{\phi _l(x_j)}\phi
_l(x_j)\overline{\phi _m(x_j)} \\
&= \Big( \sum _{l=1}^N |\phi _l (x_j)|^2 \Big) \, (T_j)_{km} \, .
\end{align*}
Writing $m(x)=\sum_{l=1}^N |\phi_l(x)|^2$,  we obtain
\begin{equation}
  \label{eq:n4}
  T_j ^2= m(x_j) T_j \, , 
\end{equation}

Let $s$ be the function  whose Fourier transform is
given by $\hat{s} = \chi _{[-1/2,1/2]^d}$ and let $T_xf(t) = f(t-x)$ be
the translation operator. Then  it is well known that $T_xs$ is the reproducing kernel for $\cB$,
that is, 
$$f(x) = \langle f, T_xs\rangle .$$ 
 To see this, by Plancherel's theorem
 and the inversion formula for the Fourier transform, if $f\in \cB$,
$$\langle f, T_xs\rangle=\langle \hat f, e^{-2\pi i x\cdot \xi}\hat s\rangle 
 =\int_{[-1/2,1/2]^d} e^{2\pi i x\cdot \xi} \hat f(\xi) \, d\xi
=\int e^{2\pi i x\cdot \xi} \hat f(\xi) \, d\xi
=f(x).$$

Since the eigenfunctions $\phi _l$ form an \onb\ for $\cB $, the
factor $m(x_j)$  in \eqref{eq:n4} is majorized by 
$$
m(x_j) = \sum _{l=1}^N |\phi _l (x_j)|^2 = \sum _{l=1}^N |\langle \phi
_l,  T_{x_j} s \rangle |^2 \leq \sum _{l=1}^\infty  |\langle \phi
_l,  T_{x_j} s \rangle |^2  = \|T_{x_j} s\|_2^2 = 1 \, .
$$
Since  $ T_j^2 \leq T_j $ and the expectation preserves the cone of
positive (semi)definite matrices (see, e.g.~\cite{Tro11}), we have
$
\E (T_j^2) \leq \E (T_j) = R^{-d} \Delta \, , 
$
and $$\E (X_j^2) = \E (T_j^2) - R^{-2d} \Delta ^2 \leq R^{-d} \Delta
.$$ 
(iii) Now  the variance of the sum of positive (semi)definite random matrices is majorized
by 
$$
\sigma ^2 = \Big\|\sum _{j=1}^r \E (X_j^2) \Big\| \leq  \Big\|\sum _{j=1}^r \E
(T_j) \Big\| = \frac{r}{R^d} \|\Delta \| \leq \frac{r}{R^d} \, .
$$
  \end{proof}

\begin{proof}[End of the proof of Proposition~\ref{prop1}]  
Now we have all information to finish the proof of
Proposition~\ref{prop1}. 
Since $\lambda _{\min } (T) = - \lambda
_{\max} (-T)$, we substitute these estimates into the matrix Bernstein
inequality with $t= r\nu /R^d$, and
obtain that
$$
\E \Big( \lambda _{\min } \Big(\sum _{j=1}^r (T_j - \E (T_j))\Big)  \leq - r\nu /R^d \Big) \leq N \exp
\Big( - \frac{r^2\nu ^2 R^{-2d}}{rR^{-d} + r\nu R^{-d}/3}\Big)\, .
$$ 
Combined with \eqref{eq:n?}, the proposition is proved. 
        \end{proof}


Random matrix theory offers several methods to obtain probability
estimates for the spectrum of random matrices.  In~\cite{BG10} we 
used  the entropy
method. We also mention the influential work of
Rudelson~\cite{Rud99} and the recent papers~\cite{MP06,RV09} on random  matrices
with independent columns. The matrix Bernstein inequality  offers a new approach and makes the probabilistic
part of the argument almost painless. The matrix Bernstein inequality was first
derived in~\cite{AW02} and improved in several subsequent papers, in
particular in~\cite{Oliv10}. The   version with the best constants
is due to Tropp~\cite{Tro11}. Matrix Bernstein inequalities also  simplify
many probabilistic arguments in compressed sensing; see the
forthcoming book~\cite{FR12}.


\section{From Sampling of Prolate Spheroidal Functions to Relevant Sampling of Bandlimited Functions }

Let $\alpha $ be the value of the $N$-th eigenvalue of $A_R$, that is, $\alpha
= \lambda _N$, let $E= E_N$ be the
orthogonal projections from $\cB $ onto $\pn $, and let $F = F_N =
\mathrm{I} - E_N$. Intuitively, since $f \in \brd $ is essentially
supported on the cube $C_R$, it should be close to the span of  the largest
eigenfunctions of $A_R$ and thus $Ff$ should be small. The following
lemma gives a precise estimate. Compare also with the proof of ~\cite[Thm.~3]{LP62}. 


\begin{lemma} \label{qestim}
  If $f\in \brd $, then 
  \begin{align*}
 \|Ef\|_2^2 &\geq \Big(1-\frac{\delta}{1-\alpha}\Big)   \|f\|_2^2, \\
 \|Ef\|_{2,R}^2 &\geq \alpha \Big(1-\frac{ \delta}{1-\alpha}\Big)
  \|f\|_2^2, \\
\|Ff\|_2^2 &\leq \frac{\delta}{1-\alpha}
  \|f\|_2^2 \, .   
  \end{align*}
\end{lemma}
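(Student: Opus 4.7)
The plan is to diagonalize $A_R$ on $\cB$. Since $A_R = QP_RQ$ is compact, self-adjoint, and vanishes on $\cB^\perp$, its eigenfunctions $\{\phi_k\}_{k\geq 1}$, corresponding to the decreasing sequence of positive eigenvalues $\lambda_1 \geq \lambda_2 \geq \cdots > 0$, form an orthonormal basis of $\cB$. For $f \in \brd \subseteq \cB$, expand $f = \sum_k c_k \phi_k$, so that $\|f\|_2^2 = \sum_k |c_k|^2$. The identity \eqref{AR-eq} combined with the defining condition $\|f\|_{2,R}^2 \geq (1-\delta)\|f\|_2^2$ of $\brd$ produces the master inequality
$$
\sum_k (1 - \lambda_k)|c_k|^2 \;=\; \|f\|_2^2 - \langle A_R f, f\rangle \;=\; \|f\|_2^2 - \|f\|_{2,R}^2 \;\leq\; \delta \|f\|_2^2.
$$

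For the third bound of the lemma, I would discard the contributions from $k \leq N$ on the left and use the monotonicity of the eigenvalues: for $k > N$ we have $\lambda_k \leq \lambda_N = \alpha$, whence $1 - \lambda_k \geq 1 - \alpha$. Therefore
$$
(1-\alpha)\|Ff\|_2^2 \;=\; (1-\alpha)\sum_{k>N}|c_k|^2 \;\leq\; \sum_{k>N}(1-\lambda_k)|c_k|^2 \;\leq\; \delta\|f\|_2^2,
$$
and dividing by $1-\alpha$ yields $\|Ff\|_2^2 \leq \frac{\delta}{1-\alpha}\|f\|_2^2$. The first bound follows immediately from the Pythagorean identity $\|Ef\|_2^2 = \|f\|_2^2 - \|Ff\|_2^2$.

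For the middle bound, I would note that $Ef = \sum_{k \leq N} c_k \phi_k$ lies in $\pn \subseteq \cB$, so applying \eqref{AR-eq} to $Ef$ and using that $\lambda_k \geq \alpha$ for $k \leq N$ gives
$$
\|Ef\|_{2,R}^2 \;=\; \langle A_R Ef, Ef\rangle \;=\; \sum_{k=1}^N \lambda_k |c_k|^2 \;\geq\; \alpha \sum_{k=1}^N |c_k|^2 \;=\; \alpha \|Ef\|_2^2,
$$
and combining with the first bound finishes the proof. There is no genuine obstacle here; everything reduces to writing $f$ in the eigenbasis of $A_R$ and exploiting the ordering $\lambda_k \geq \alpha$ for $k \leq N$ versus $\lambda_k \leq \alpha$ for $k > N$. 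The only conceptual point worth checking is that the $\phi_k$'s actually exhaust $\cB$, which is built into the tensor-product spectral theory of $A_R$ recorded in Section~2.
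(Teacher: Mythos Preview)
Your proof is correct and follows essentially the same approach as the paper: both expand $f$ in the eigenbasis $\{\phi_k\}$ of $A_R$, use \eqref{AR-eq} to convert the defining inequality of $\brd$ into $\sum_k (1-\lambda_k)|c_k|^2 \le \delta\|f\|_2^2$ (the paper writes the equivalent form $\sum_k \lambda_k|c_k|^2 \ge 1-\delta$), and then exploit $\lambda_k\le\alpha$ for $k>N$ and $\lambda_k\ge\alpha$ for $k\le N$. The only cosmetic difference is the order of presentation: you bound $\|Ff\|_2^2$ first and deduce $\|Ef\|_2^2$ via Pythagoras, while the paper bounds $\|Ef\|_2^2$ directly and then reads off $\|Ff\|_2^2$.
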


\begin{proof}
  Expand $f\in \cB $ with respect to the prolate spheroidal functions
  as $f= \sum _{j=1}^\infty c_j \phi _j$. Without loss of generality,
  we may assume that $\|f\|_2 = \|c\|_2 = 1$. Since $f\in \brd $, we
  have that
$$
1-\delta \leq \|f\|_{2,R}^2 = \int _{C_R} |f(t)|^2 \, dt = \langle A_R f,f\rangle  =
\sum _{j=1}^\infty |c_j|^2 \lambda _j \, .
$$
Set 
$$A = \|Ef\|_2^2 =  \sum _{j=1}^N |c_j|^2
$$
 and $B = \sum _{j>N} |c_j|^2 =  1-A = \|Ff\|_2^2$. 
Since $\lambda _j \leq \lambda _N = \alpha $ for $j>N$, we estimate
$A= \|Ef\|_2^2$ as follows:
\begin{align*}
  A&= \sum _{j=1}^N |c_j|^2 \geq  \sum _{j=1}^N |c_j|^2 \lambda _j \\
&= \sum _{j=1}^\infty  |c_j|^2 \lambda _j - \sum _{j=N+1}^\infty
|c_j|^2 \lambda _j \\
&\geq 1-\delta - \lambda _N \sum _{j=N+1}^\infty  |c_j|^2  \\
&= 1-\delta - \alpha (1-A) \, . 
\end{align*}
The inequality $A \geq 1-\delta  - \alpha (1-A)$ implies that 
$\|Ef\|_2^2 = A \geq 1 - \frac{\delta }{1-\alpha} $ and 
using the orthogonal decomposition $f=Ef+Ff$,
 $$
B = \|Ff\|_2^2  \leq \frac{\delta }{1-\alpha} \, .
$$ 
Finally, $\|Ef\|_{2,R}^2 = \sum _{j=1}^N \lambda _j |c_j|^2 \geq
\alpha A \geq \alpha (1 - \frac{\delta }{1-\alpha})$, as claimed. 
\end{proof}

\noindent\textsl{REMARK} (due to J.-L.\ Romero): As mentioned 
in~\cite{BG10}, if $f\in \brd$ and $f(x_j) = 0$ for sufficiently many
samples $x_j\in C_R$, then $f\equiv 0$. However, $f$ cannot be
completely determined by samples in $C_R$ alone. This is a consequence
of the fact that $\brd$ is not a linear space. Given a finite subset
$S \subseteq C_R$, consider  the
finite-dimensional subspace $\cH_0$  of $\cB$ spanned by the reproducing
kernels $T_{x}s, x\in S $. If $\phi \in \cH _0^\perp $, then $\phi (x)
= \langle \phi , T_x s\rangle = 0$ for $x\in S$. Thus by adding a
function in $\cH _0 ^\perp $ of sufficiently small norm to $f\in \brd
$, one obtains a different function with the same samples. More precisely,
let $f\in \brd$ with $ \|f\|_2=1 $ and $\int _{C_R} |f(x)|^2 dx = \gamma > 1-\delta$
and $\phi \in \cH _0^\perp $ with $\|\phi \|_2 =1$. Then $f(x) +
\epsilon \phi
(x) = f(x)$ for $x\in S$ and  $f+\epsilon \phi \in \brd $ for
sufficiently small $\epsilon >0$.  

\vs 

Despite this non-uniqueness, one can approximate $f$ from the samples
up to an accuracy $\delta $, as is shown by the next lemma.


We will require a standard estimate for sampled $2$-norms, a so-called
Plancherel-Polya-Nikolskij inequality~\cite{triebel83}. Assume that
$\cX = \{x_j\}\subseteq \rd $ is relatively separated, i.e., the ``covering index''  
$$ \max _{k\in \zd } \#
\cX \cap (k+[-1/2,1/2]^d) =: N_0 <\infty $$
 is finite. Then there exists
a constant $\kappa>0$, such
that  
\begin{equation}
  \label{eq:heu1}
  \sum _{j=1}^\infty  |f(x_j)|^2 \leq \kappa N_0 \|f\|_2^2   \qquad \text{ for all }
  f\in \cB \, .
\end{equation}
The constant $\kappa $ can be chosen as $\kappa = e^{d \pi }$. Since
the standard  proof in~\cite{triebel83} uses  a maximal inequality
with an non-explicit constant,  we will give a simple
argument using Taylor series in the appendix.

\begin{lemma}\label{approxrec}
Let $\{x_j: j=1, \dots , r\}$ be a finite subset of $C_R$ with
covering index $N_0$. Then the
solution to the least square problem 
\begin{equation}
  \label{eq:heu3}
p_{opt} = \mathrm{arg min} _{p\in \pn }  \Big\{ \sum _{j=1}^r |f(x_j) - p(x_j)|^2   \Big\}
\end{equation}
satisfies the error estimate
\begin{equation}
  \label{eq:heu2}
  \sum _{j=1}^r |f(x_j) - p_{opt}(x_j)|^2 \leq N_0 \kappa \frac{\delta
  }{1-\alpha} \|f\|_2^2 \qquad \text{ for all } f\in \brd \, .  
\end{equation}
\end{lemma}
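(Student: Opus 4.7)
The plan is to exploit the minimality of $p_{opt}$ by choosing a convenient competitor $p \in \pn$ and then invoking the two tools already at our disposal: the tail estimate for $Ff$ from Lemma~\ref{qestim} and the sampling-norm inequality~\eqref{eq:heu1}.

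First I would take $p = Ef$, the orthogonal projection of $f$ onto $\pn$. Since $p_{opt}$ is the minimizer of $\sum_{j=1}^r |f(x_j) - p(x_j)|^2$ over $p \in \pn$, we have
\begin{equation*}
\sum_{j=1}^r |f(x_j) - p_{opt}(x_j)|^2 \leq \sum_{j=1}^r |f(x_j) - Ef(x_j)|^2 = \sum_{j=1}^r |Ff(x_j)|^2.
\end{equation*}
The key observation is that $Ff = f - Ef \in \cB$, because both $f$ and $Ef \in \pn \subseteq \cB$ are band-limited. So $Ff$ is an honest band-limited function and the Plancherel-Polya-Nikolskij inequality~\eqref{eq:heu1} applies directly to it.

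Next I would apply~\eqref{eq:heu1} (with the constant $\kappa = e^{d\pi}$ from the appendix) to the band-limited function $Ff$ sampled on the relatively separated set $\{x_j\}_{j=1}^r \subseteq C_R$ of covering index $N_0$, obtaining
\begin{equation*}
\sum_{j=1}^r |Ff(x_j)|^2 \leq \kappa N_0 \|Ff\|_2^2.
\end{equation*}
Finally, the third inequality of Lemma~\ref{qestim} gives $\|Ff\|_2^2 \leq \frac{\delta}{1-\alpha}\|f\|_2^2$ for all $f \in \brd$. Chaining the three estimates yields precisely~\eqref{eq:heu2}.

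There is no real obstacle here; the proof is essentially a one-line observation plus two citations. The only subtlety worth verifying is that the sum in~\eqref{eq:heu1} is written as an infinite sum over a relatively separated set, so I would note that our finite sum is majorized by the corresponding infinite-sum bound (or, equivalently, that~\eqref{eq:heu1} holds for any finite subsample with covering index at most $N_0$), which is immediate since all summands are non-negative.
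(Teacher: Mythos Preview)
Your proof is correct and follows exactly the same route as the paper: compare $p_{opt}$ with the competitor $Ef$, rewrite $f-Ef=Ff$, apply the Plancherel--Polya inequality~\eqref{eq:heu1} to $Ff\in\cB$, and finish with the bound $\|Ff\|_2^2\le \frac{\delta}{1-\alpha}\|f\|_2^2$ from Lemma~\ref{qestim}. Your remark about the finite sum being dominated by the infinite-sum bound is a harmless clarification.
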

\begin{proof}
We  combine Lemma~\ref{qestim} with~\eqref{eq:heu1}. 
\begin{align*}
  \sum _{j=1}^r |f(x_j) - p_{opt}(x_j)|^2 &\leq \sum _{j=1}^r |f(x_j)
  - Ef(x_j)|^2 \\
&= \sum _{j=1}^r |Ff(x_j)|^2 \leq \kappa N_0 \|Ff\|_2^2 \\
&\le \kappa N_0 \frac{\delta}{1-\alpha}\|f\|_2^2
\end{align*}
\end{proof}


Next we compare sampling inequalities for the space of prolate
polynomials $\pn $  to sampling inequalities for functions in $\brd
$.

\begin{lemma}\label{compprolo}
Let $\{x_j: j=1, \dots , r\}$ be a finite subset of $C_R$ with
covering index $N_0$. 

 If the inequality 
  \begin{equation}
    \label{eq:n6}
    \frac{1}{r} \sum _{j=1}^r \Big(|p(x_j)|^2 - R^{-d}
    \|p\|_{2,R}^2\Big)  \geq  -\frac{\nu}{R^d} \|p\|_2^2
  \end{equation}
 holds for all $p\in \pn $, then the inequality
 \begin{equation}
   \label{eq:n7}
        \sum _{j=1}^r |f(x_j)|^2 \geq  A  \|f\|_2^2
 \end{equation}
 holds for all $f\in \brd $ with a constant
$$
A  = \frac{r}{R^d} \Big(\alpha - \frac{\alpha \delta}{1-\alpha} -\nu \Big)  -  2 \kappa N_0
\frac{\delta }{1-\alpha}
$$
\end{lemma}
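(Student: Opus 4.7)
The plan is to reduce the sampling inequality on $\brd$ to the hypothesized inequality on $\cP_N$ by orthogonal decomposition. For $f\in\brd$, I write $f = Ef + Ff$ with $Ef\in\cP_N$ and $Ff\in\cP_N^\perp\cap\cB$. Three ingredients will combine: the hypothesis \eqref{eq:n6} applied to the projection $Ef$; the Plancherel--Polya--Nikolskij inequality \eqref{eq:heu1} applied to the tail $Ff$; and Lemma~\ref{qestim}, which quantifies that $f\in\brd$ forces $Ff$ to be small.

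First I apply \eqref{eq:n6} to the trial function $p = Ef\in\cP_N$, and substitute the bounds $\|Ef\|_{2,R}^2 \geq \alpha\bigl(1-\tfrac{\delta}{1-\alpha}\bigr)\|f\|_2^2$ and $\|Ef\|_2^2 \leq \|f\|_2^2$ from Lemma~\ref{qestim}. This yields
\begin{equation*}
\sum_{j=1}^r |Ef(x_j)|^2 \;\geq\; \frac{r}{R^d}\Big(\alpha - \frac{\alpha\delta}{1-\alpha} - \nu\Big)\|f\|_2^2,
\end{equation*}
which is precisely the main term of $A$. In parallel, since $Ff\in\cB$, inequality \eqref{eq:heu1} together with the tail estimate $\|Ff\|_2^2 \leq \tfrac{\delta}{1-\alpha}\|f\|_2^2$ from Lemma~\ref{qestim} gives $\sum_{j=1}^r |Ff(x_j)|^2 \leq \kappa N_0\, \tfrac{\delta}{1-\alpha}\|f\|_2^2$, which matches the size of the error term in $A$.

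The last step is to pass from $\sum|Ef(x_j)|^2$ to $\sum|f(x_j)|^2$. I expand $|Ef(x_j) + Ff(x_j)|^2 = |Ef(x_j)|^2 + 2\mathrm{Re}\,\overline{Ef(x_j)}\,Ff(x_j) + |Ff(x_j)|^2$, discard the nonnegative $|Ff|^2$ term, and bound the cross sum by Cauchy--Schwarz, followed by \eqref{eq:heu1} on each factor (using $\|Ef\|_2\le\|f\|_2$). Combined with Step~1, this gives \eqref{eq:n7} with the stated constant $A$.

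The main obstacle will be keeping the cross-term loss genuinely linear in $\tfrac{\delta}{1-\alpha}$, as the statement requires, rather than the $\sqrt{\delta/(1-\alpha)}$ that naive Cauchy--Schwarz produces. I expect to handle this by a weighted Young's inequality $2|Ef(x_j)||Ff(x_j)| \leq \gamma|Ef(x_j)|^2 + \gamma^{-1}|Ff(x_j)|^2$, tuning $\gamma$ so that the weighted $|Ef|^2$-term is absorbed by the large main term from Step~1, leaving the contribution $2\kappa N_0 \tfrac{\delta}{1-\alpha}\|f\|_2^2$. A cleaner alternative would be to replace $Ef$ by the discrete least-squares approximant $p_{opt}$ of Lemma~\ref{approxrec}: its orthogonality relation $\sum (f(x_j)-p_{opt}(x_j))\overline{p_{opt}(x_j)}=0$ eliminates the cross term outright and reduces the error to the residual $\sum |f(x_j)-p_{opt}(x_j)|^2$, for which Lemma~\ref{approxrec} already supplies exactly the bound $\kappa N_0\tfrac{\delta}{1-\alpha}\|f\|_2^2$.
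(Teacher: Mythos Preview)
Your strategy is exactly the paper's: decompose $f=Ef+Ff$, apply the hypothesis \eqref{eq:n6} to $Ef\in\cP_N$, control the tail via the Plancherel--Polya inequality \eqref{eq:heu1} together with Lemma~\ref{qestim}, and handle the cross term. The paper does precisely this, using the triangle inequality in $\ell^2$ to write
\[
\Big(\sum_j|f(x_j)|^2\Big)^{1/2}\ge\Big(\sum_j|Ef(x_j)|^2\Big)^{1/2}-\Big(\sum_j|Ff(x_j)|^2\Big)^{1/2},
\]
squaring, and bounding the cross product by $2\kappa N_0\|Ef\|_2\|Ff\|_2$.

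You are right to flag the cross-term issue, and in fact you have spotted a slip in the paper's own argument. The paper passes from $2\kappa N_0\|Ef\|_2\|Ff\|_2$ to $2\kappa N_0\frac{\delta}{1-\alpha}\|f\|_2^2$ citing $\|Ff\|_2^2\le\frac{\delta}{1-\alpha}\|f\|_2^2$ and $\|Ef\|_2\le\|f\|_2$; but those bounds only give $\|Ef\|_2\|Ff\|_2\le\sqrt{\tfrac{\delta}{1-\alpha}}\,\|f\|_2^2$. So the stated constant $A$ should carry a $\sqrt{\delta/(1-\alpha)}$ in the error term, exactly as you suspected.

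Neither of your proposed repairs recovers the linear $\delta/(1-\alpha)$, however. Weighted Young $2ab\le\gamma a^2+\gamma^{-1}b^2$ is, after optimizing in $\gamma$, just Cauchy--Schwarz again: with $\sum|Ef(x_j)|^2\le\kappa N_0\|f\|_2^2$ and $\sum|Ff(x_j)|^2\le\kappa N_0\frac{\delta}{1-\alpha}\|f\|_2^2$ the minimum of $\gamma+\gamma^{-1}\frac{\delta}{1-\alpha}$ is $2\sqrt{\delta/(1-\alpha)}$; absorbing $\gamma\sum|Ef(x_j)|^2$ into the main term instead leads to the constraint $\gamma M+\gamma^{-1}K=2K$ with $M>2K$, which has no solution in $\gamma>0$. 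The $p_{\mathrm{opt}}$ route does kill the cross term via discrete Pythagoras, but then the hypothesis \eqref{eq:n6} applied to $p_{\mathrm{opt}}$ needs a lower bound on $\|p_{\mathrm{opt}}\|_{2,R}$ and an upper bound on $\|p_{\mathrm{opt}}\|_2$, and Lemma~\ref{qestim} says nothing about $p_{\mathrm{opt}}$ (which is the \emph{discrete} least-squares projection, not $Ef$).

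In short: your plan matches the paper's, your diagnosis of the cross term is sharper than the paper's, and the correct outcome of this argument is the stated $A$ with $\frac{\delta}{1-\alpha}$ replaced by $\sqrt{\frac{\delta}{1-\alpha}}$ in the last term. The downstream statements (Theorem~\ref{main}, Theorem~\ref{main0}) survive with the obvious adjustment to the constant.
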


\rem\ For $A $ to be positive we need 
$$
r \geq  R^d  \, \frac{2 \kappa N_0
\frac{\delta }{1-\alpha}}{\alpha - \frac{\alpha \delta}{1-\alpha} -\nu}. $$

\begin{proof}
Using the triangle inequality and the  orthogonal decomposition $f= Ef + Ff$, we 
  estimate 
$$
    \Big(\sum _{j=1}^r |f(x_j)|^2 \Big)^{1/2} \geq  \Big(\sum _{j=1}^r
    |Ef(x_j)|^2 \Big)^{1/2} -  \Big(\sum _{j=1}^r |Ff(x_j)|^2
    \Big)^{1/2} \, .
$$
Taking squares and using \eqref{eq:heu1} on $Ef$ and $Ff$ in the cross product
term, we continue as 
\begin{align*}
\sum _{j=1}^r|f(x_j)|^2 & \geq \sum _{j=1}^r |Ef(x_j)|^2 - 2
\Big(\sum _{j=1}^r |Ef(x_j)|^2 \Big)^{1/2}   \Big(\sum _{j=1}^r |Ff(x_j)|^2 \Big)^{1/2} \\
&\qquad \qquad +\sum _{j=1}^r |Ff(x_j)|^2 \\
& \geq \sum _{j=1}^r |Ef(x_j)|^2 - 2 \kappa N_0
\|Ef\|_2 \|Ff\|_2 \\
&\geq   \sum _{j=1}^r |Ef(x_j)|^2 - 2 \kappa N_0
\frac{\delta }{1-\alpha} \|f\|_2^2 \, ,
  \end{align*}
since by  Lemma~\ref{qestim},  $\|Ff\|_2^2 \leq
\tfrac{\delta }{1-\alpha}\|f\|_2^2 $ and $\|Ef\|_2\leq \|f\|_2 $.  
Now we make use of  hypothesis
~\eqref{eq:n6} and Lemma~\ref{qestim} and obtain 
\begin{align*}
     \sum _{j=1}^r |f(x_j)|^2   &\geq  \sum _{j=1}^r |Ef(x_j)|^2 -  2 \kappa N_0
\frac{\delta }{1-\alpha} \|f\|_2^2 \\ 
&\geq \frac{r}{R^d}  \|Ef\|_{2,R}^2 - \frac{\nu r}{R^d} \|Ef\|_2^2  - 2 \kappa N_0
\frac{\delta }{1-\alpha} \|f\|_2^2 \\
&\geq \frac{\alpha r }{R^d} \Big( 1-\frac{\delta}{1-\alpha  } \Big)
\|f\|_{2}^2 - \frac{\nu r}{R^d}  \|f\|_2^2 -  2 \kappa N_0
\frac{\delta }{1-\alpha} \|f\|_2^2 \, .
 \end{align*}
So we may choose $A$ to be 
$$
A=  \frac{ r }{R^d} \Big( \alpha-\frac{\alpha\delta}{1-\alpha  } - \nu
\Big)
 -  2 \kappa N_0
\frac{\delta }{1-\alpha} \, . 
$$
 \end{proof}

\vs

The final ingredient we need is a deviation inequality for the
covering index $N_0 = \max _{k\in \zd  } \{x_j\} \cap (k+[-1/2,1/2]^d)$.

\begin{lemma}\label{N0estimate}
Suppose $R\ge 2$ and $\{x_j: j=1, \ldots, r\}$ are independent
and identically distributed random variables that are  uniformly
distributed  over $C_R$.
Let $a>R^{-d}$. Then 
$$\P(N_0>ar)\le (R+2)^d\exp\Big(-r\big(a\log(aR^d)-(a-R^{-d})\big) \Big).$$
\end{lemma}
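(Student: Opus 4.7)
The plan is to decompose the event $\{N_0>ar\}$ into a union over the integer lattice boxes intersecting $C_R$, apply a one-sided Chernoff bound to each box count, and finish with a union bound.

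First I would enumerate the cubes that can contribute. For each $k\in\zd$ define $Y_k=\#\{j\le r:x_j\in k+[-1/2,1/2]^d\}$, so that $N_0=\max_k Y_k$. A box $k+[-1/2,1/2]^d$ intersects $C_R$ only if $|k_i|\le (R+1)/2$ for every coordinate $i$; for $R\ge 2$ the number of integers in $[-(R+1)/2,(R+1)/2]$ is at most $R+2$, so at most $(R+2)^d$ indices $k$ can contribute a positive $Y_k$.

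Next, for each such $k$ the variable $Y_k$ is a sum of $r$ independent Bernoullis with parameter $p_k=|(k+[-1/2,1/2]^d)\cap C_R|/R^d\le R^{-d}$. For a Binomial$(r,p)$ variable $Y$, the standard exponential Markov inequality with $\E[e^{\theta Y}]\le\exp(rp(e^\theta-1))$ and the optimal choice $\theta=\log(a/p)$ yields
\begin{equation*}
\P(Y\ge ar)\le\exp\bigl(-r\bigl(a\log(a/p)-(a-p)\bigr)\bigr).
\end{equation*}
The function $g(p):=a\log(a/p)-(a-p)$ satisfies $g'(p)=1-a/p<0$ for $p<a$; hence $g$ is decreasing on $(0,a)$. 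Since $a>R^{-d}$ and $p_k\le R^{-d}<a$, this gives
\begin{equation*}
\P(Y_k\ge ar)\le\exp\bigl(-r\bigl(a\log(aR^d)-(a-R^{-d})\bigr)\bigr).
\end{equation*}

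Finally a union bound over the at most $(R+2)^d$ relevant $k$ yields the claimed estimate. The only real subtlety is the monotonicity step that lets one replace the unknown $p_k$ by the uniform bound $R^{-d}$ without losing the correct sign in the exponent; the assumption $a>R^{-d}$ is precisely what makes $g$ monotone in the useful direction, and the condition $R\ge 2$ is used only to ensure the crude lattice count $(R+2)^d$ for the relevant indices.
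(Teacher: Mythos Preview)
Your proof is correct and follows essentially the same route as the paper: a union bound over the at most $(R+2)^d$ lattice cubes meeting $C_R$, combined with an exponential Markov/Chernoff bound for each binomial count, optimized at $\theta=\log(aR^d)$. The only cosmetic difference is that the paper bounds the moment generating function by replacing $p_k$ with $R^{-d}$ \emph{before} optimizing, whereas you optimize first and then invoke the monotonicity of $g(p)=a\log(a/p)-(a-p)$; both are equivalent and your explicit monotonicity check makes transparent why the hypothesis $a>R^{-d}$ is needed.
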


\begin{proof} 
Let $D_k=k+[-1/2,1/2]^d$ for $k\in \zd$.  
Note that we need at most  $(R+2)^d$ of the $D_k$'s to cover $C_R$.
If $N_0>ar$, then for at least one $k$,  $D_k$ must contain at
least $ar$ of the $x_j$'s. Therefore
\begin{equation}\label{eq:N0est}
\P(N_0>ar)\le (R+2)^d \max_{k\in \zd} 
\P(\#\{x_j\}\cap D_k>ar).
\end{equation}

Fix $k\in \zd$. For any $b>0$,
by Chebyshev's inequality
\begin{align*}
\P(\#\{x_j\}\cap D_k>ar)&=\P\Big(\sum_{j=1}^r \chi_{D_k}(x_j)>ar\Big)
=\P\Big(\exp\Big(b \sum_{j=1}^r \chi_{D_k}(x_j)\Big)>e^{bar}\Big)\\
&\le e^{-bar}\E \exp\Big(b \sum_{j=1}^r \chi_{D_k}(x_j)\Big).
\end{align*}
Since the $x_j$
are  uniformly distributed  over $C_R$, then  $\chi_{D_k}(x_j)$ is equal to 1 with probability at most $R^{-d}$ and otherwise equals zero. Therefore, using
the independence,
\begin{align*}\P(\#\{x_j\}\cap D_k>ar)&\le e^{-bar}\prod_{j=1}^r \E
e^{b\chi_{D_k}(x_j)}\\
&\le e^{-bar}((1-R^{-d})+e^b R^{-d})^r =e^{-bar}((1+(e^b-1)R^{-d})^r\\
&\le e^{-bar} (\exp((e^b-1)R^{-d}))^r.
\end{align*}
With the optimal choice  
$b=\log (aR^d)$  the last term is then 
$$\exp\Big( -r\big(a\log(aR^d)-(a-R^{-d})\big) \Big).$$
Substituting this in \eqref{eq:N0est}
proves the lemma.
\end{proof}

By combining the finite-dimensional result of Proposition~\ref{prop1} with the
estimates of Lemmas~\ref{compprolo} and \ref{N0estimate} and the
appropriate choice of the free parameters, we obtain the following 
theorem. 
\begin{tm}
  \label{main}
 Let $\{x_j: j\in \bN \}$ be a sequence of
  independent and identically distributed  random variables that are uniformly distributed in $C_R$.
   Suppose $R\ge 2$,  
$$\delta < \frac{1 }{ 2( 1 + 12 \kappa )} \, ,$$
and
$$\nu<\tfrac12 -\delta(1+12\kappa).$$
Let 
\begin{equation}\label{eq:seetA}
A= \frac{r}{R^d} \Big( \tfrac{1}{2} - \delta  -\nu -12 \delta \kappa
\Big) \, .
\end{equation}
   Then the sampling
  inequality 
  \begin{equation}
    \label{eq:hm4}
A \|f\|_2^2 \leq \sum _{j=1}^r |f(x_j)|^2 \leq r \|f\|_2^2  \qquad
\text{ for all } f\in \brd     
  \end{equation}
holds with probability at least 
\begin{equation}
  \label{eq:hm5}
  1 - R^d \exp\Bigg(- \frac{\nu^2 r}{R^{d}(1+\nu /3)}\Bigg) -(R+2)^d
\exp\Big(-\frac{r}{R^d}(3\log 3-2)\Big)\, . 
\end{equation}
\end{tm}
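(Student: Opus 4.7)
The plan is to derive the theorem from Proposition~\ref{prop1} (random sampling on the finite-dimensional space $\pn$), Lemma~\ref{compprolo} (transfer of a sampling inequality from $\pn$ to $\brd$), and Lemma~\ref{N0estimate} (control of the covering index $N_0$), with the free parameters $N$ and $a$ chosen so that the constants match those of the statement.

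First I would fix $N$ to be the number of eigenvalues of $A_R$ at least $1/2$, so that $\alpha = \lambda_N$ is bounded below by (and close to) $1/2$. The spectral information recalled in Section~2 gives $N \leq R^d$, which is precisely what is needed in the probability bound of Proposition~\ref{prop1}. Taking $\alpha = 1/2$ causes the two awkward factors of Lemma~\ref{compprolo} to simplify: $\alpha\delta/(1-\alpha) = \delta$ and $\delta/(1-\alpha) = 2\delta$.

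Next I would introduce two high-probability events and combine them by a union bound. Let $E_1$ be the event that the finite-dimensional inequality of Proposition~\ref{prop1} holds; using $N \leq R^d$, $\P(E_1^c) \leq R^d \exp\bigl(-\nu^2 r/(R^d(1+\nu/3))\bigr)$. Let $E_2$ be the event that the covering index satisfies $N_0 \leq 3r/R^d$. Applying Lemma~\ref{N0estimate} with the choice $a = 3/R^d$ (admissible since $a > R^{-d}$), the exponent reduces to $a\log(aR^d) - (a - R^{-d}) = (3\log 3 - 2)/R^d$, giving $\P(E_2^c) \leq (R+2)^d \exp\bigl(-(3\log 3 - 2)r/R^d\bigr)$. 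Summing these bounds produces precisely \eqref{eq:hm5}.

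On $E_1 \cap E_2$, Lemma~\ref{compprolo} yields $\sum_{j=1}^r |f(x_j)|^2 \geq A\|f\|_2^2$ with $A = (r/R^d)(\alpha - \alpha\delta/(1-\alpha) - \nu) - 2\kappa N_0 \delta/(1-\alpha)$. Substituting $\alpha = 1/2$ and $N_0 \leq 3r/R^d$ collapses this to $A \geq (r/R^d)(1/2 - \delta - \nu - 12\kappa\delta)$, which is exactly \eqref{eq:seetA}; the smallness assumptions on $\delta$ and $\nu$ guarantee $A > 0$. The upper bound $\sum|f(x_j)|^2 \leq r\|f\|_2^2$ is immediate from the pointwise estimate $|f(x)| \leq \|f\|_2$ recalled in \eqref{eq:n5}, applied to the $r$ samples. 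The only mildly technical point is that the eigenvalues $\lambda_j$ are discrete, so $\alpha = \lambda_N$ may not land exactly at $1/2$; however, the function $\alpha \mapsto \alpha - \alpha\delta/(1-\alpha)$ is continuous at $\alpha = 1/2$, and the resulting small distortion is absorbed into the explicit constants given the smallness of $\delta$. This is the main (but very minor) obstacle; the probabilistic core of the argument is essentially free thanks to the matrix Bernstein input already deployed in Proposition~\ref{prop1}.
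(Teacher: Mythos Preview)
Your proposal is correct and follows essentially the same route as the paper: union bound on the two bad events from Proposition~\ref{prop1} and Lemma~\ref{N0estimate} with $a=3/R^d$, then invoke Lemma~\ref{compprolo} on the good event with $\alpha=1/2$, and handle the upper bound via \eqref{eq:n5}. The paper simply sets $N=R^d$ and $\alpha=1/2$ without further comment; your choice of $N$ as the number of eigenvalues $\geq 1/2$ (so that $N\le R^d$) is equivalent. One cosmetic remark: the continuity argument at the end is unnecessary, since the proof of Lemma~\ref{qestim} actually works verbatim for any $\alpha\in[\lambda_{N+1},\lambda_N]$, so with your choice of $N$ one may take $\alpha=1/2$ exactly and obtain the stated constant without any approximation.
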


\begin{proof}
Since $|f(x)|\le \norm{f}_2$ for $f\in \cB $, the right hand inequality
in \eqref{eq:hm4} is immediate.
We take  $\alpha  = 1/2$ and  $N=R^d$   in 
Proposition \ref{prop1} and $a=3R^{-d}$ in
Lemma~\ref{N0estimate}.  
Let $$V_1=\Big\{
\inf _{f\in \cP _N , \|f\|_{2} = 1  }
    \frac{1}{r}\sum_{j=1}^r (|f(x_j)|^2 -\frac{1}{R^d} \|f\|_{2,R}^2)
    \leq - \frac{\nu}{R^d}
\Big\} $$
and let 
$$V_2=\{N_0>ar\}.$$

By Proposition \ref{prop1} and Lemma \ref{N0estimate}, 
the probability of $(V_1\cup V_2)^c$ is bounded below by \eqref{eq:hm5}.
By Lemma \ref{compprolo},
$$\frac1{r}\sum_{j=1}^r|f(x_j)|^2\ge A\norm{f}_2^2$$
for all $f\in \brd$ on the set $(V_1\cup V_2)^c$.
With $\alpha = 1/2$ and $N_0 = 3R^{-d}$ the lower bound  $A$ of
Lemma~\ref{compprolo} simplifies to  $A= \frac{r}{R^d} \Big( \tfrac{1}{2} - \delta  -\nu -12 \delta \kappa
\Big)$. 
Our assumptions on  $\delta$ and $\nu$  guarantee that $A>0$.
\end{proof}

The formulation of Theorem~\ref{main0}  now follows.   With  $N =
R^d$ and $0< \nu < 1/2-\delta <1/2$,  if $\epsilon>0$ is given and
      \begin{equation}
        \label{eq:hm6}
        r \geq \max \Big(R^d \frac{1+\nu /3}{\nu ^2} \log
        \frac{2R^d}{\epsilon} \, , \frac{R^d}{3\log 3-2} \log
        \frac{2 (R+2)^d}{\epsilon}\Big) = R^d \frac{1+\nu /3}{\nu ^2} \log
        \frac{2R^d}{\epsilon} \, ,
      \end{equation}
then the probability in  \eqref{eq:hm5} will be larger than $1-\epsilon$.

\rem\ Observe that the parameters $\delta $ and $R$ are not
independent. As mentioned in ~\cite[p.~14]{BG10}, for $\brd $ to be
non-empty, we need $\delta \geq 2\pi \sqrt{2R} e^{-\pi R}$ (up to
terms of higher order). Thus for small $\delta $ as in Theorem~\ref{main} we need
to choose $R$ of order $R\approx c \log (d/\delta )$. 
 
\appendix

\section{The Plancherel-Polya inequality}

We finish by showing that the constant $\kappa $ in the Plancherel-Polya
inequality~\eqref{eq:heu1} can be chosen explicitly to be $\kappa = e^{d
  \pi }$.  The argument is simple and  well-known, see, for
example, \cite{Gro92b}. 
\begin{lemma}
  Let  $\{x_j: j\in \bN \} $ be a set in $\rd $ with
covering index $N_0$. Then 
$$  
\sum _{j=1}^\infty |f(x_j)|^2 \leq N_0 e^{d\pi } \|f\|_2^2 \, .
$$
\end{lemma}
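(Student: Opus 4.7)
The plan is to reduce the sum over arbitrary samples $\{x_j\}$ to a sum over integer lattice samples (where an exact Plancherel-type identity applies) and then bootstrap back through Taylor expansion and a tuned Cauchy--Schwarz. Let $Q_k = k + [-1/2,1/2]^d$ for $k \in \zd$; these tile $\rd$, and the covering-index hypothesis gives
$$
\sum_{j=1}^\infty |f(x_j)|^2 \;\leq\; N_0 \sum_{k \in \zd} \sup_{x \in Q_k} |f(x)|^2,
$$
so it suffices to prove $\sum_{k \in \zd} \sup_{x \in Q_k} |f(x)|^2 \leq e^{d\pi} \norm{f}_2^2$.

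For $x \in Q_k$ I would Taylor expand $f$ around $k$, which is legal because $f$ extends to an entire function of exponential type $\pi$ by Paley--Wiener. Since $|(x-k)^\alpha| \leq 2^{-|\alpha|}$,
$$
|f(x)| \;\leq\; \sum_\alpha \frac{|\partial^\alpha f(k)|}{\alpha!\, 2^{|\alpha|}}.
$$
A weighted Cauchy--Schwarz with a free parameter $t > 0$ then yields
$$
|f(x)|^2 \;\leq\; \Big(\sum_\alpha \frac{t^{|\alpha|}}{\alpha!}\Big) \sum_\alpha \frac{|\partial^\alpha f(k)|^2}{\alpha!\,(4t)^{|\alpha|}} \;=\; e^{dt} \sum_\alpha \frac{|\partial^\alpha f(k)|^2}{\alpha!\,(4t)^{|\alpha|}}.
$$

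The key input is that each derivative $\partial^\alpha f$ is itself band-limited to $[-1/2,1/2]^d$, so expanding $\widehat{\partial^\alpha f}$ in the orthonormal Fourier basis of $L^2([-1/2,1/2]^d)$ yields the exact identity $\sum_{k \in \zd} |\partial^\alpha f(k)|^2 = \norm{\partial^\alpha f}_2^2$, while Bernstein's inequality (from $|\xi_i| \leq 1/2$ on $\mathrm{supp}\,\hat f$) gives $\norm{\partial^\alpha f}_2 \leq \pi^{|\alpha|} \norm{f}_2$. Summing the previous bound over $k$ and factorizing the multi-index sum produces
$$
\sum_{k \in \zd} \sup_{x \in Q_k} |f(x)|^2 \;\leq\; e^{dt} \sum_\alpha \frac{\pi^{2|\alpha|}}{\alpha!\,(4t)^{|\alpha|}} \norm{f}_2^2 \;=\; e^{d(t + \pi^2/(4t))} \norm{f}_2^2.
$$
Minimizing over $t$ (optimum at $t = \pi/2$) gives exactly $e^{d\pi} \norm{f}_2^2$, closing the argument.

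There is no serious obstacle here: the whole computation is mechanical once the framework is chosen. The one nonobvious ingredient is the tuned weight in Cauchy--Schwarz; the naive unweighted choice would yield the worse constant $e^{d(1+\pi^2/4)}$, and it is only the balanced choice $t = \pi/2$ that matches the Taylor factor $e^{d\pi/2}$ against the Bernstein factor $e^{d\pi/2}$ to produce the advertised constant $\kappa = e^{d\pi}$.
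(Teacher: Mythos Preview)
Your proof is correct and follows essentially the same approach as the paper: Taylor expand around the nearest lattice point, apply a weighted Cauchy--Schwarz, use the sampling identity $\sum_{k\in\zd}|\partial^\alpha f(k)|^2=\|\partial^\alpha f\|_2^2$ together with Bernstein's inequality, and optimize the weight. Your parametrization via $t$ (optimum $t=\pi/2$) is just a relabeling of the paper's parameter $\theta$ via $t=4^{-\theta}$, and your preliminary reduction to $\sum_k\sup_{x\in Q_k}|f(x)|^2$ is a trivial reorganization of the paper's case $N_0=1$ followed by the general case.
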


\begin{proof}
 Let $k\in \zd $ and $x_j \in k+[-1/2,1/2]=: D_k$.  Then $\|x_j - k
  \|_\infty \leq 1/2$. Consider  the Taylor expansion of
  $f(x_j)$ at $k$ (with the usual multi-index notation):
$$
    |f(x_j)| = \big| \sum _{\alpha \geq 0} \frac{D^\alpha f(k)}{\alpha
    !} (x_j - k)^\alpha \big|  
\leq \sum _{\alpha \geq 0} \frac{|D^\alpha f(k)|}{\alpha
    !} \Big( \tfrac{1}{2} \Big)^{|\alpha |} \, .
$$
We now let $\theta\in (0,1)$ and apply Cauchy-Schwarz:
\begin{align}
|f(x_j)|^2 &\leq \sum _{\alpha \geq 0} \frac{1}{\alpha !}
\big(\tfrac{1}{2} \big)^{2  \theta | \alpha|} \, \, \sum _{\alpha \geq 0}
\frac{|D^\alpha f(k)|^2}{\alpha !} \big(\tfrac{1}{2} \big)^{2 (1-\theta
  )|\alpha|}
  \label{eq:c25}\\
&= e^{d/4^\theta } \sum _{\alpha \geq 0}
\frac{|D^\alpha f(k)|^2}{\alpha !} \big(\tfrac{1}{2} \big)^{2(1-\theta
  ) |\alpha|}. \nonumber
\end{align}

If $f\in \cB $, then by Shannon's sampling theorem (or because the
reproducing kernels $T_ks, k\in \zd $, form an \onb\ of $\cB $) we have 
$$
\sum _{k\in \zd } |f(k)|^2 = \|f\|_2 ^2 \qquad \forall f\in \cB  \, .
$$
To estimate the partial derivatives we use Bernstein's inequality $\|D^\alpha
f \|_2 \leq \pi ^{|\alpha |} \|f\|_2$.

We first assume that $N_0 = 1$, i.e., each cube $D_k$ contains at most
one of the $x_j$'s.  Then we obtain, after interchanging the order of summation 
\begin{align}
  \sum _{j\in \bN } |f(x_j)^2| &\leq    e^{d/4^\theta } \sum _{\alpha \geq 0} \sum _{k\in \zd } 
\frac{|D^\alpha f(k)|^2}{\alpha !} \big(\tfrac{1}{2}
\big)^{2(1-\theta ) |\alpha|} \notag  \\
& =   e^{d/4^\theta } \, \sum _{\alpha \geq 0} \big(\tfrac{1}{2}
\big)^{2(1-\theta ) |\alpha|} \frac{\|D^\alpha f\|_2^2}{\alpha !} \notag  \\
&\leq      e^{d/4^\theta }  \sum _{\alpha \geq 0}   \big(\tfrac{1}{2}
\big)^{2(1-\theta ) |\alpha|} \frac{\pi ^{2|\alpha |}
}{\alpha !}   \|f\|_2^2  = e^{d/4^\theta }  e^{d \pi ^2/4^{1-\theta }} \|f\|_2^2
\label{eq:c24}
\end{align}

  The choice $4^\theta = 2/\pi  $ yields the constant $\kappa =  e^{d/4^\theta }
 \, e^{d \pi ^2/4^{1-\theta }} = e^{d \pi }$. For arbitrary $N_0$ we
 obtain   
 \begin{align*}
  \sum _{j\in \bN } |f(x_j)^2|  = \sum _{k\in \zd }\  \sum _{\{j: x_j \in
    D_k\}} |f(x_j)^2| \leq N_0 e^{d \pi }  \|f\|_2^2 \, ,
\end{align*}
 as claimed. 
\end{proof}

Possibly the Plancherel-Polya inequality could be improved to a local
estimate of the form $\sum _{x_j \in C_R} |f(x_j|^2 \leq \tilde \kappa
N_0 \|f\|_{2,R}^2$, but we did not pursue this question.

\def\cprime{$'$} \def\cprime{$'$} \def\cprime{$'$} \def\cprime{$'$}
  \def\cprime{$'$}

 \bibliographystyle{abbrv}
 \bibliography{general,new}

\end{document}